\newtheorem{theorem}{Theorem}
\newtheorem{proposition}[theorem]{Proposition}
\newtheorem{lemma}[theorem]{Lemma}
\newtheorem{definition}[theorem]{Definition}
\newtheorem{example}[theorem]{Example}
\newtheorem{remark}[theorem]{Remark}
\newtheorem{notation}[theorem]{Notation}
\begin{document}

\author[B. Bayour]{Benaoumeur Bayour}
\address{Benaoumeur Bayour\newline
\indent University of Chlef, B.P. 151, Hay Es-salem, Chlef, Algeria}
\email{b.benaoumeur@gmail.com}


\author[D. F. M. Torres]{Delfim F. M. Torres}
\address{Delfim F. M. Torres\newline
\indent Center for Research and Development in Mathematics and Applications (CIDMA),\newline
\indent Department of Mathematics, University of Aveiro, 3810-193 Aveiro, Portugal}
\indent \email{delfim@ua.pt}


\title[Existence of Solution to a Local Fractional Differential Equation]{%
Existence of Solution to a Local Fractional Nonlinear Differential Equation}

\begin{abstract}
We prove existence of solution to a local fractional nonlinear
differential equation with initial condition. For that
we introduce the notion of tube solution.
\end{abstract}

\subjclass[2010]{26A33, 34A12}

\keywords{Existence of solutions, fractional differential equations,
local fractional derivatives, conformable fractional derivatives,
initial value problems.}

\maketitle


\section{Introduction}

Fractional calculus is a branch of mathematical analysis that
studies the possibility of taking noninteger order powers
of the differentiation and/or integration operators.
Even though the term ``fractional'' is a misnomer,
it has been widely accepted for a long time:
the term was coined by the famous mathematician Leibniz in 1695
in a letter to L'Hopital \cite{MR1347689}. In the paper
\emph{What is a fractional derivative?} \cite{MR3342452},
Ortigueira and Machado distinguish between local and nonlocal
fractional derivatives. Here we are concerned with local operators only.
Such local approach to the fractional calculus dates back at least to 1974,
to the use of the fractional incremental ratio in \cite{MR0492115}.
For an overview and recent developments of the local approach
to fractional calculus we refer the reader to \cite{Magin,MR3351489,Yang,MR3204525}
and references therein.

Recently, Khalil et al. introduced a new well-behaved definition of local fractional
(noninteger order) derivative, called the conformable fractional derivative \cite{khs}.
The new calculus is very interesting and is getting an increasing of interest
-- see \cite{MyID:324,Chung} and references therein. In \cite{MR3293309}, Abdeljawad
proves chain rules, exponential functions, Gronwall's inequality,
fractional integration by parts, Taylor power series expansions
and Laplace transforms for the conformable fractional calculus.
Furthermore, linear differential systems are discussed \cite{MR3293309}.
In \cite{MR3326681}, Batarfi et al. obtain the Green function for a conformable
fractional linear problem and then introduce the study of nonlinear
conformable fractional differential equations. See also \cite{MR3335759}
where, using the conformable fractional derivative, a second-order conjugate
boundary value problem is investigated and utilizing the corresponding
positive fractional Green's function and an appropriate fixed point theorem,
existence of a positive solution is proved. For abstract Cauchy problems
of conformable fractional systems see \cite{ref:ref2}. Here we are concerned
with the following problem:
\begin{equation}
\label{pl}
\begin{cases}
x^{(\alpha)}(t)=f(t,x(t)), & t\in[a,b], \quad a>0,\\
x(a)=x_{0},\\
\end{cases}
\end{equation}
where $f:[a,b]\times \mathbb{R}\rightarrow \mathbb{R}$ is a continuous function,
$x^{(\alpha)}(t)$ denotes the conformable fractional derivative of $x$ at $t$
of order $\alpha$, $\alpha\in(0,1)$. For the first time in the literature
of conformable fractional calculus, we introduce the notion of tube solution.
Such idea of tube solution has been used with success to investigate existence
of solutions for ordinary differentiable equations \cite{MF,HGF}, delta and
nabla differential equations on time scales \cite{MyID:332,HG2,HG},
and dynamic inclusions \cite{HGF2}. Roughly speaking, the tube solution method
generalizes the method of lower and upper solution \cite{MBCT,ACG,JGL,MRZJ}.

The paper is organized as follows. In Section~\ref{sec:2}, we present
the main concepts of the local conformable fractional calculus and we give some
useful preliminary results. In Section~\ref{sec:3}, we prove existence
of solution to problem \eqref{pl} by using the notion of tube solution
and Schauder's fixed-point theorem (see Theorem~\ref{thm:mr}).
We end with Section~\ref{sec:4}, where an illustrative example is given.


\section{Preliminaries}
\label{sec:2}

We consider fractional derivatives in the conformable sense \cite{khs}.

\begin{definition}[Conformable fractional derivative \cite{khs}]
\label{def:cfd}
Let $\alpha\in(0,1)$ and $f : [0,\infty)\rightarrow \mathbb{R}$. The
conformable fractional derivative of $f$ of order $\alpha$ is defined by
$T_{\alpha}(f)(t):=\lim_{\epsilon\rightarrow0}
\frac{f(t+\epsilon t^{1-\alpha})-f(t)}{\epsilon}$
for all $t>0$. Often, we write $f^{(\alpha)}$ instead of $T_{\alpha}(f)$
to denote the conformable fractional derivative of $f$ of order $\alpha$.
In addition, if the conformable fractional derivative of $f$ of order
$\alpha$ exists, then we simply say that $f$ is $\alpha$-differentiable.
If $f$ is $\alpha$-differentiable in some $t \in (0,a)$, $a>0$, and
$\lim_{t\rightarrow 0^{+}}f^{(\alpha)}(t)$ exists, then we
define $f^{(\alpha)}(0):=\lim_{t\rightarrow 0^{+}}f^{(\alpha)}(t)$.
\end{definition}

\begin{theorem}[\cite{khs}]
\label{th0}
Let $\alpha\in(0,1]$ and assume $f,g$ to be $\alpha$-differentiable. Then,
\begin{enumerate}
\item $T_{\alpha}(af+bg)=aT_{\alpha}(f)+bT_{\alpha}(g)$ for all $a,b\in\mathbb{R}$;
\item $T_{\alpha}(fg)=fT_{\alpha}(g)+gT_{\alpha}(f)$;
\item $T_{\alpha}\left(f/g\right)=\left(gT_{\alpha}(f)-fT_{\alpha}(g)\right)/g^{2}$.
\end{enumerate}
If, in addition, $f$ is differentiable at a point $t>0$, then
$T_{\alpha}(f)(t)=t^{1-\alpha}\frac{df}{dt}(t)$.
\end{theorem}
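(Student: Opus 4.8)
The plan is to prove the four statements of Theorem~\ref{th0} directly from the limit definition of the conformable fractional derivative given in Definition~\ref{def:cfd}. The unifying idea is that the conformable derivative is, for differentiable $f$, just a rescaling of the ordinary derivative by the factor $t^{1-\alpha}$; so each algebraic rule should reduce to its classical counterpart after factoring out this scaling. I would first establish the final auxiliary claim—that $T_{\alpha}(f)(t)=t^{1-\alpha}\frac{df}{dt}(t)$ when $f$ is differentiable at $t>0$—because it is the workhorse for the routine verification of properties (1)--(3) and explains \emph{why} the product, quotient, and linearity rules take the form they do.

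First I would prove the scaling identity. Fix $t>0$ and set $h=\epsilon\,t^{1-\alpha}$, so that $\epsilon=h\,t^{\alpha-1}$ and $\epsilon\to 0$ if and only if $h\to 0$ (since $t>0$ makes $t^{1-\alpha}$ a fixed nonzero constant). Substituting into the defining limit gives
\[
T_{\alpha}(f)(t)=\lim_{\epsilon\to 0}\frac{f(t+\epsilon t^{1-\alpha})-f(t)}{\epsilon}
=\lim_{h\to 0}\frac{f(t+h)-f(t)}{h}\,t^{1-\alpha}
=t^{1-\alpha}\frac{df}{dt}(t),
\]
the last equality using differentiability of $f$ at $t$. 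With this in hand, linearity (1) follows from linearity of $\frac{d}{dt}$, the product rule (2) follows from the Leibniz rule $\frac{d}{dt}(fg)=f\,g'+g\,f'$ multiplied through by $t^{1-\alpha}$, and the quotient rule (3) follows identically from the classical quotient rule; in each case the common factor $t^{1-\alpha}$ distributes across the expression and is reabsorbed into the conformable derivatives of the individual factors.

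The one point requiring genuine care, rather than mere translation from the classical calculus, is that properties (1)--(3) are asserted for \emph{every} $\alpha$-differentiable $f,g$, not only for those that happen to be ordinary differentiable, and the range is the closed interval $\alpha\in(0,1]$ whereas the scaling identity was derived assuming ordinary differentiability. The honest way to handle the general $\alpha$-differentiable case is to argue directly from the $\epsilon$-limit: for linearity one adds the two difference quotients; for the product rule one uses the standard telescoping decomposition
\[
f(t+\epsilon t^{1-\alpha})g(t+\epsilon t^{1-\alpha})-f(t)g(t)
=f(t+\epsilon t^{1-\alpha})\bigl[g(t+\epsilon t^{1-\alpha})-g(t)\bigr]
+g(t)\bigl[f(t+\epsilon t^{1-\alpha})-f(t)\bigr],
\]
divides by $\epsilon$, and passes to the limit, noting that $\alpha$-differentiability of $f$ forces $f(t+\epsilon t^{1-\alpha})\to f(t)$ as $\epsilon\to 0$. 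The quotient rule is obtained the same way after first verifying the reciprocal rule at points where $g(t)\neq 0$.

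I expect the main obstacle to be precisely this continuity observation inside the product rule: one must confirm that $\alpha$-differentiability at $t$ implies that $f$ is continuous at $t$, so that the leading factor $f(t+\epsilon t^{1-\alpha})$ converges to $f(t)$ and the telescoping argument closes. This is the conformable analogue of the classical fact that differentiability implies continuity, and it is what lets the proof avoid assuming ordinary differentiability. Since the substitution $h=\epsilon\,t^{1-\alpha}$ is valid only for $t>0$ (it degenerates at $t=0$, where $t^{1-\alpha}=0$), I would keep the standing hypothesis $t>0$ throughout and treat the endpoint via the limiting convention $f^{(\alpha)}(0):=\lim_{t\to 0^{+}}f^{(\alpha)}(t)$ from Definition~\ref{def:cfd} only if needed.
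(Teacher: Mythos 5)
The paper states this result without proof, importing it from Khalil et al.\ \cite{khs}, so there is no in-paper argument to compare against; your reconstruction is correct and follows the standard route of the cited source (substitution $h=\epsilon t^{1-\alpha}$ for the scaling identity, direct difference quotients plus telescoping for (1)--(3)). The step you rightly single out --- that $\alpha$-differentiability at $t>0$ forces continuity there, since $f(t+\epsilon t^{1-\alpha})-f(t)=\epsilon\cdot\frac{f(t+\epsilon t^{1-\alpha})-f(t)}{\epsilon}\to 0\cdot T_{\alpha}(f)(t)=0$ --- is exactly what lets the product and quotient arguments close without assuming ordinary differentiability, so the proposal is complete.
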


\begin{remark}
From Theorem~\ref{th0} it follows that if $f \in C^1$, then one has
$$
\lim_{\alpha\rightarrow 1} T_\alpha(f)(t) = f'(t)
$$
and
\begin{equation}
\label{eq:der:0}
\lim_{\alpha\rightarrow 0} T_\alpha(f)(t) = t f'(t).
\end{equation}
So $T_\alpha(f)$ is ``conformable'' in the sense it coincides
with $f'$ in the case $\alpha\rightarrow 1$ and satisfies
similar properties to the integer-order calculus.
Note that the property $\lim_{\alpha\rightarrow 0} T_\alpha(f) \neq f$
is not uncommon in fractional calculus, both for local and nonlocal
operators: see, e.g., the local fractional derivative of
\cite{Katugampola,Katugampola:arXiv}, for which property
\eqref{eq:der:0} also holds \cite{Anderson};
and the classical nonlocal Marchaud fractional derivative,
which is zero when $\alpha \rightarrow 0$ \cite{MR1347689}.
Note, however, that we only have $T_\alpha(f)(t) = t^{1-\alpha} f'(t)$
in case $f$ is differentiable. If one considers
a function that is not differentiable at a point $t$,
then the conformable derivative is not $t^{1-\alpha} f'(t)$.
For applications we refer the reader to \cite{Chung}.
\end{remark}

\begin{example}
\label{ex}
Let $\alpha\in(0,1]$. Functions $f(t) = t^{p}$, $p\in\mathbb{R}$,
$g(t) \equiv \lambda$, $\lambda \in \mathbb{R}$,
$h(t) = e^{ct}$, $c\in\mathbb{R}$,
and $\beta(t) = e^{\frac{1}{\alpha}t^{\alpha}}$,
are $\alpha$-differentiable with conformable
fractional derivatives of order $\alpha$ given by
\begin{enumerate}
\item $T_{\alpha}(f)(t)=pt^{p-\alpha}$;
\item $T_{\alpha}(g)(t)=0$;
\item $T_{\alpha}(h)(t)=ct^{1-\alpha}e^{ct}$;
\item $T_{\alpha}(\beta)(t)=e^{\frac{1}{\alpha}t^{\alpha}}$.
\end{enumerate}
\end{example}

\begin{remark}
Differentiability implies $\alpha$-differentiability but the contrary
is not true: a nondifferentiable function can be $\alpha$-differentiable.
For a discussion of this issue see \cite{khs}.
\end{remark}

\begin{definition}[Conformable fractional integral \cite{khs}]
Let $\alpha\in(0,1)$ and $f : [a,\infty)\rightarrow \mathbb{R}$. The
conformable fractional integral of $f$ of order $\alpha$
from $a$ to $t$, denoted by $I_{\alpha}^{a}(f)(t)$, is defined by
$$
I_{\alpha}^{a}(f)(t)
:=\int_{a}^{t}\frac{f(\tau)}{\tau^{1-\alpha}}d\tau,
$$
where the above integral is the usual improper Riemann integral.
\end{definition}

\begin{theorem}[\cite{khs}]
\label{th1}
If $f$ is a continuous function in the domain of $I_{\alpha}^{a}$, then
$T_{\alpha}\left(I_{\alpha}^{a}(f)\right)(t)=f(t)$ for all $t\geq a$.
\end{theorem}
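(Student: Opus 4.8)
The plan is to reduce the statement to the classical Fundamental Theorem of Calculus together with the differentiable-case formula $T_{\alpha}(F)(t)=t^{1-\alpha}\frac{dF}{dt}(t)$ recorded in the final assertion of Theorem~\ref{th0}. First I would set
$$
F(t):=I_{\alpha}^{a}(f)(t)=\int_{a}^{t}\frac{f(\tau)}{\tau^{1-\alpha}}\,d\tau
$$
and observe that, because $f$ is continuous on its domain and we work with $t>0$, the integrand $\tau\mapsto f(\tau)/\tau^{1-\alpha}$ is continuous on $[a,t]$: the factor $\tau^{1-\alpha}$ (with $1-\alpha\in(0,1)$) is continuous and strictly positive away from the origin, so no singularity appears on the interval of integration.

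Next I would invoke the classical Fundamental Theorem of Calculus. Since the integrand is continuous, $F$ is ordinarily differentiable with
$$
\frac{dF}{dt}(t)=\frac{f(t)}{t^{1-\alpha}}.
$$
In particular, $F$ is differentiable at every relevant point $t>0$, which is exactly the hypothesis required to apply the last part of Theorem~\ref{th0}.

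Finally I would apply that assertion: for a function differentiable at $t>0$ one has $T_{\alpha}(F)(t)=t^{1-\alpha}\frac{dF}{dt}(t)$. Substituting the expression just obtained for $\frac{dF}{dt}$ yields
$$
T_{\alpha}\bigl(I_{\alpha}^{a}(f)\bigr)(t)
=t^{1-\alpha}\cdot\frac{f(t)}{t^{1-\alpha}}=f(t),
$$
which is the claimed identity for all $t\geq a$ (with a one-sided derivative understood at the endpoint $t=a$).

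I do not expect a genuine obstacle here: the result is simply the conformable analogue of the Fundamental Theorem of Calculus, and the only point requiring care is the continuity, and hence classical differentiability, of the primitive $F$. This in turn hinges on staying away from the origin so that $\tau^{1-\alpha}$ remains continuous and nonvanishing on $[a,t]$, after which the bridge formula from Theorem~\ref{th0} does all the work.
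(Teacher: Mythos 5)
The paper itself offers no proof of this statement---it is quoted directly from \cite{khs}---so there is nothing internal to compare against; your argument is correct and is in fact the standard proof from that reference: continuity of the integrand $\tau\mapsto f(\tau)/\tau^{1-\alpha}$ away from the origin gives classical differentiability of $F=I_{\alpha}^{a}(f)$ by the Fundamental Theorem of Calculus, and the bridge formula $T_{\alpha}(F)(t)=t^{1-\alpha}\frac{dF}{dt}(t)$ from the last part of Theorem~\ref{th0} then cancels the weight exactly. Your closing remark that $t=a$ requires a one-sided reading of the limit in Definition~\ref{def:cfd} is an appropriate, and usually omitted, point of care.
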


\begin{notation}
Let $0 < a < b$. We denote by ${_{\alpha}\mathfrak{J}}_{a}^{b}[f]$
the value of the integral $\int_{a}^{b}\frac{f(t)}{t^{1-\alpha}}dt$,
that is, ${_{\alpha}\mathfrak{J}}_{a}^{b}[f] := I_{\alpha}^{a}(f)(b)$.
\end{notation}

\begin{proposition}
\label{pr1}
Assume $f\in L^{1}([a,b],\mathbb{R})$, $0<a<b$. Then
$\left|{_{\alpha}\mathfrak{J}}_{a}^{b}[f]\right|
\leq  {_{\alpha}\mathfrak{J}}_{a}^{b}[|f|]$.
\end{proposition}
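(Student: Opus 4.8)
The plan is to reduce the statement to the classical triangle inequality for the (Lebesgue) integral. First I would unfold the notation: by definition ${_{\alpha}\mathfrak{J}}_{a}^{b}[f] = \int_a^b \frac{f(t)}{t^{1-\alpha}}\,dt$ and ${_{\alpha}\mathfrak{J}}_{a}^{b}[|f|] = \int_a^b \frac{|f(t)|}{t^{1-\alpha}}\,dt$, so that the inequality to be proved reads
\[
\left|\int_a^b \frac{f(t)}{t^{1-\alpha}}\,dt\right| \le \int_a^b \frac{|f(t)|}{t^{1-\alpha}}\,dt.
\]

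The key structural observation is that the weight $w(t) := t^{\alpha-1} = 1/t^{1-\alpha}$ is strictly positive on $[a,b]$, since $\alpha\in(0,1)$ and $t\ge a>0$. Because $w$ is positive, it may be pulled through the absolute value pointwise, giving $\left|\frac{f(t)}{t^{1-\alpha}}\right| = \frac{|f(t)|}{t^{1-\alpha}}$ for (almost) every $t\in[a,b]$. This is what allows the right-hand side to be written as the integral of $|f|$ against the same weight rather than the integral of $|f/t^{1-\alpha}|$.

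Before invoking the classical inequality I would confirm integrability of the weighted integrand. Since $a>0$ and $\alpha-1<0$, the weight is bounded on $[a,b]$ by $w(t)\le a^{\alpha-1}$; combined with the hypothesis $f\in L^{1}([a,b],\mathbb{R})$, this yields $f/t^{1-\alpha}\in L^{1}([a,b],\mathbb{R})$, so the integral defining ${_{\alpha}\mathfrak{J}}_{a}^{b}[f]$ is a genuine finite integral and the quantity $\left|{_{\alpha}\mathfrak{J}}_{a}^{b}[f]\right|$ is well defined.

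With integrability secured, I would apply the standard estimate $\left|\int g\right|\le\int|g|$ to the function $g(t)=f(t)/t^{1-\alpha}$, and then rewrite $|g|$ using the positivity remark above, which gives the claim directly. I do not expect any genuine obstacle: the only point requiring a moment's care is the integrability check, which is immediate from $a>0$ forcing the weight to be bounded. If one preferred to avoid measure theory, the identical estimate follows from the monotonicity and triangle inequality of the improper Riemann integral, using the continuity of $t\mapsto t^{\alpha-1}$ on $[a,b]$.
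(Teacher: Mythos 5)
Your proposal is correct and follows essentially the same route as the paper's own proof: unfold the notation, apply the triangle inequality $\left|\int g\right|\leq\int|g|$ to $g(t)=f(t)/t^{1-\alpha}$, and use the positivity of the weight $t^{\alpha-1}$ on $[a,b]$ to identify $|g|$ with $|f|/t^{1-\alpha}$. The integrability check you add is a harmless extra precaution that the paper leaves implicit.
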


\begin{proof}
Let $f\in L^{1}([a,b],\mathbb{R})$. Then,
$$
\left|{_{\alpha}\mathfrak{J}}_{a}^{b}[f]\right|
=\left|\int_{a}^{b}\frac{f(t)}{t^{1-\alpha}}dt\right|
\leq \int_{a}^{b} \left|\frac{f(t)}{t^{1-\alpha}}\right| dt
=\int_{a}^{b} \frac{|f(t)|}{t^{1-\alpha}}dt.
$$
Therefore, $\left|{_{\alpha}\mathfrak{J}}_{a}^{b}[f]\right|
\leq  {_{\alpha}\mathfrak{J}}_{a}^{b}[|f|]$
and the proposition is proved.
\end{proof}

\begin{notation}
We denote by $C^{(\alpha)}([a,b],\mathbb{R})$, $0<a<b$, $\alpha>0$,
the set of all real-valued functions $f:[a,b]\rightarrow \mathbb{R}$ that are
$\alpha$-differentiable and for which the $\alpha$-derivative is continuous.
We often abbreviate $C^{(\alpha)}([a,b],\mathbb{R})$ by $C^{(\alpha)}([a,b])$.
\end{notation}

The next lemma is a consequence of the conformable mean
value theorem proved in \cite{khs} by noting the discussion under
Definition 2.1 in \cite{MR3293309}. Note that
$r(b) - r(a) = I_\alpha^a\left(r^{(\alpha)}\right)(b)$
follows from Lemma~2.8 in \cite{MR3293309}.

\begin{lemma}
\label{le}
Let $r\in C^{(\alpha)}([a,b])$, $0<a<b$, such that $r^{(\alpha)}(t)<0$ on
$\left\{ t\in[a,b] : r(t)>0\right\}$. If $r(a)\leq 0$,
then $r(t)\leq 0$ for every $t\in[a,b]$.
\end{lemma}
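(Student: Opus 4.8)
The plan is to argue by contradiction. Suppose the conclusion fails, so that $r(t_1)>0$ for some $t_1\in(a,b]$. Before anything else I would record that $\alpha$-differentiability forces continuity: since $a>0$, the increment $t+\epsilon t^{1-\alpha}\to t$ as $\epsilon\to0$, and the existence of the limit defining $T_\alpha(r)(t)$ forces $r(t+\epsilon t^{1-\alpha})\to r(t)$, so $r$ is continuous on $[a,b]$. (The hypothesis $r\in C^{(\alpha)}([a,b])$ additionally gives that $r^{(\alpha)}$ is continuous.)

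Next I would isolate the last instant before $t_1$ at which $r$ is nonpositive. Set $t_0:=\sup\{t\in[a,t_1]:r(t)\le 0\}$. Because $r(a)\le 0$ this set contains $a$, so $t_0$ is well defined and $t_0\ge a$; because $r(t_1)>0$ and $r$ is continuous, $t_0<t_1$. Continuity together with the definition of the supremum then yields $r(t_0)=0$ and $r(t)>0$ for every $t\in(t_0,t_1]$.

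I would then invoke the fundamental relation noted just above the statement, namely $r(\beta)-r(\gamma)=I_\alpha^{\gamma}\!\left(r^{(\alpha)}\right)(\beta)$ (a consequence of Lemma~2.8 in \cite{MR3293309}, applicable since $r\in C^{(\alpha)}$), now on the subinterval $[t_0,t_1]$, to obtain
$$
r(t_1)=r(t_1)-r(t_0)=\int_{t_0}^{t_1}\frac{r^{(\alpha)}(\tau)}{\tau^{1-\alpha}}\,d\tau.
$$
On $(t_0,t_1)$ we have $r(\tau)>0$, hence by hypothesis $r^{(\alpha)}(\tau)<0$, while $\tau^{1-\alpha}>0$ since $\tau\ge a>0$. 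Thus the integrand is negative throughout the interval of integration, so the right-hand side is $\le 0$, contradicting $r(t_1)>0$. This contradiction forces $r(t)\le 0$ for every $t\in[a,b]$.

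The step I expect to require the most care is the passage from the pointwise sign hypothesis on $\{r>0\}$ to the integral estimate. One must make sure the ``fundamental theorem'' $r(t_1)-r(t_0)=I_\alpha^{t_0}\!\left(r^{(\alpha)}\right)(t_1)$ is legitimately available on the subinterval $[t_0,t_1]$, and that the continuity of $r$ used to locate $t_0$ and to conclude $r(t_0)=0$ is genuinely a consequence of $\alpha$-differentiability and not an additional assumption; once these are in place, the sign of the integrand does the rest.
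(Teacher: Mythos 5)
Your proof is correct and follows essentially the same route as the paper's: both argue by contradiction and apply the identity $r(\beta)-r(\gamma)=I_\alpha^{\gamma}\left(r^{(\alpha)}\right)(\beta)$ from Lemma~2.8 of \cite{MR3293309} on a subinterval where $r>0$, so that the hypothesis $r^{(\alpha)}<0$ there yields a sign contradiction. The only difference is bookkeeping: you anchor the subinterval at the last zero of $r$ before the offending point, while the paper anchors it at a maximum point of $r$ and contradicts maximality, which forces it into two cases ($t_{\circ}>a$ and $t_{\circ}=a$) that your choice of $t_0$ avoids.
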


\begin{proof}
Suppose the contrary. If there exists
$t\in[a,b]$ such that $r(t)>0$, then there exists
$t_{\circ}\in[a,b]$ such that $r(t_{\circ})=\max_{a\leq t\leq b}(r(t))>0$
because $r\in C^{(\alpha)}([a,b])$ and $r(t)>0$. There are two cases.
(i) if $t_{\circ}>a$, then there exists an interval
$[t_{1},t_{\circ}]$ included in $[a, t_{\circ}]$ such that $r(t)>0$
for all $t\in[t_{1},t_{\circ}]$. It follows from the assumption
$r^{(\alpha)}(t)<0$ for all $t\in[t_{1},t_{\circ}]$ and Lemma~2.8
of \cite{MR3293309} that $I_{\alpha}^{t_{1}}\left(r^{(\alpha)}\right)(t_{\circ})
=r(t_{\circ})-r(t_{1})<0$, which contradicts the fact that
$r(t_{\circ})$ is a maximum. (ii) If $t_{\circ}=a$, then $r(t_{\circ})>0$
is impossible from hypothesis.
\end{proof}

\begin{theorem}
\label{th2}
If $g\in L^{1}([a,b])$, then function $x:[a,b]\rightarrow \mathbb{R}$ defined by
\begin{equation}
\label{eq:f:x:th2}
x(t):=e^{-\frac{1}{\alpha}\left(\frac{t}{a}\right)^{\alpha}}\left(
e^{\frac{1}{\alpha}}x_{0} + {_{\alpha}\mathfrak{J}}_{a}^{t}\left[
\frac{g(s)}{e^{-\frac{1}{\alpha}(\frac{s}{a})^{\alpha}}}\right]\right)
\end{equation}
is solution to problem
\begin{equation*}
\begin{cases}
x^{(\alpha)}(t)+\frac{1}{a^{\alpha}}x(t)=g(t), & t\in[a,b], \quad a>0,\\
x(a)=x_{0}.
\end{cases}
\end{equation*}
\end{theorem}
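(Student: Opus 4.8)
The plan is to verify the two requirements of the problem separately: the initial condition $x(a)=x_{0}$ and the differential equation. The initial condition is immediate. Since ${_{\alpha}\mathfrak{J}}_{a}^{a}[\,\cdot\,]=0$, evaluating \eqref{eq:f:x:th2} at $t=a$ gives $x(a)=e^{-\frac{1}{\alpha}}\bigl(e^{\frac{1}{\alpha}}x_{0}+0\bigr)=x_{0}$. The substance of the proof lies in recognizing \eqref{eq:f:x:th2} as the outcome of the integrating-factor method and checking that it satisfies the equation.

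To this end I would introduce the factors $\nu(t):=e^{-\frac{1}{\alpha}(t/a)^{\alpha}}$ and $\mu(t):=1/\nu(t)=e^{\frac{1}{\alpha}(t/a)^{\alpha}}$, so that, using $g(s)/e^{-\frac{1}{\alpha}(s/a)^{\alpha}}=\mu(s)g(s)$, formula \eqref{eq:f:x:th2} reads $x(t)=\nu(t)\bigl(e^{\frac{1}{\alpha}}x_{0}+{_{\alpha}\mathfrak{J}}_{a}^{t}[\mu g]\bigr)$. The key computation is the conformable derivative of the factor $\nu$. Since $\nu$ is differentiable, the last part of Theorem~\ref{th0} gives $\nu^{(\alpha)}(t)=t^{1-\alpha}\nu'(t)$, and a direct calculation (the scaled analogue of item~(4) in Example~\ref{ex}) yields $\nu^{(\alpha)}(t)=-\frac{1}{a^{\alpha}}\nu(t)$.

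Next I would differentiate $x$ by means of the product rule, item~(2) of Theorem~\ref{th0}. Writing $G(t):={_{\alpha}\mathfrak{J}}_{a}^{t}[\mu g]=I_{\alpha}^{a}(\mu g)(t)$, the product rule gives $x^{(\alpha)}(t)=\nu(t)\,(e^{\frac{1}{\alpha}}x_{0}+G)^{(\alpha)}(t)+(e^{\frac{1}{\alpha}}x_{0}+G(t))\,\nu^{(\alpha)}(t)$. The constant term is annihilated by $T_{\alpha}$, and by the fundamental theorem (Theorem~\ref{th1}) one has $G^{(\alpha)}=\mu g$. Inserting $\nu^{(\alpha)}=-\frac{1}{a^{\alpha}}\nu$ and using $\nu\mu\equiv1$, the expression collapses to $x^{(\alpha)}(t)=\nu(t)\mu(t)g(t)-\frac{1}{a^{\alpha}}\nu(t)\bigl(e^{\frac{1}{\alpha}}x_{0}+G(t)\bigr)=g(t)-\frac{1}{a^{\alpha}}x(t)$, which is exactly the desired equation.

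The main obstacle is that Theorem~\ref{th1} is stated for \emph{continuous} integrands, whereas here $g$ is only in $L^{1}([a,b])$. Because $a>0$, the weight $s^{1-\alpha}$ is bounded away from zero on $[a,b]$, so the map $s\mapsto \mu(s)g(s)/s^{1-\alpha}$ lies in $L^{1}$ and $G$ is absolutely continuous. Hence $G$ is differentiable at almost every $t$, with $G'(t)=\mu(t)g(t)/t^{1-\alpha}$, and the identity $G^{(\alpha)}(t)=t^{1-\alpha}G'(t)=\mu(t)g(t)$ from the last part of Theorem~\ref{th0} holds only at points of differentiability. I would therefore make explicit that the equation $x^{(\alpha)}+\frac{1}{a^{\alpha}}x=g$ is to be understood in the almost-everywhere sense, holding in the classical conformable sense at every point where $g$ is continuous; in the intended application $g$ is continuous, so this subtlety does not affect the existence result of Section~\ref{sec:3}.
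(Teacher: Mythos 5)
Your proposal is correct and follows essentially the same route as the paper: verify $x(a)=x_0$ directly, and compute $x^{(\alpha)}$ via the product rule together with $T_{\alpha}(f)(t)=t^{1-\alpha}f'(t)$ for the exponential factor and Theorem~\ref{th1} for the integral term. Your closing remark about the mismatch between the $L^{1}$ hypothesis on $g$ and the continuity assumption in Theorem~\ref{th1} is a point of care the paper's proof silently skips over, and your resolution (almost-everywhere validity, with the classical sense recovered for the continuous $g$ used in Section~\ref{sec:3}) is appropriate.
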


\begin{proof}
Let $x:[a,b]\rightarrow \mathbb{R}$ be the function defined by \eqref{eq:f:x:th2}.
We know from Theorems~\ref{th0} and \ref{th1} that
\begin{equation*}
\begin{split}
x^{(\alpha)}(t)&= t^{1-\alpha}\left(-\frac{1}{\alpha}\left(
\frac{1}{a}\right)^{\alpha}\alpha t^{\alpha-1}\right)e^{
-\frac{1}{\alpha}\left(\frac{t}{a}\right)^{\alpha}}\left(
e^{\frac{1}{\alpha}}x_{0}+{_{\alpha}\mathfrak{J}_{a}^{t}}\left[\frac{g(s)}{e^{
-\frac{1}{\alpha}(\frac{s}{a})^{\alpha}}}\right]\right)
+ e^{-\frac{1}{\alpha}(\frac{t}{a})^{\alpha}}\left(
\frac{g(t)}{e^{-\frac{1}{\alpha}(\frac{t}{a})^{\alpha}}}\right)\\
&=-\left(\frac{1}{a}\right)^{\alpha}
e^{-\frac{1}{\alpha}(\frac{t}{a})^{\alpha}}\left(
e^{\frac{1}{\alpha}}x_{0}+{_{\alpha}\mathfrak{J}}_{a}^{t}\left[
\frac{g(s)}{e^{-\frac{1}{\alpha}\left(\frac{s}{a}\right)^{\alpha}}}\right]\right)
+g(t)\\
&=-\left(\frac{1}{a}\right)^{\alpha} x(t)+g(t).
\end{split}
\end{equation*}
We just obtained that
$x^{(\alpha)}(t)+\left(\frac{1}{a}\right)^{\alpha} x(t)=g(t)$.
On the other hand,
$$
x(a)=e^{-\frac{1}{\alpha}(\frac{a}{a})^{\alpha}}\left(
e^{\frac{1}{\alpha}}x_{0}+{_{\alpha}\mathfrak{J}}_{a}^{a}\left[
\frac{g(s)}{e^{-\frac{1}{\alpha}\left(\frac{s}{a}\right)^{\alpha}}}\right]\right)
=e^{-\frac{1}{\alpha}}\left(e^{\frac{1}{\alpha}}x_{0}+0\right)
= x_{0}
$$
and the proof is complete.
\end{proof}

Theorem~\ref{th2} is enough for our purposes.
It should be mentioned, however, that it can be generalized
by benefiting from Lemma~2.8 in \cite{MR3293309} with its
higher-order version \cite[Proposition~2.9]{MR3293309}.

\begin{theorem}
\label{thm:ref2}
If $g\in L^{1}([a,b])$ and $p(t)$ is continuous on $[a,b]$,
then the function $x:[a,b]\rightarrow \mathbb{R}$ defined by
\begin{equation}
\label{eq:ref2:conc}
x(t)=\frac{1}{\mu(t)}\left(x(a)\mu(a)+I_{\alpha}^{a}(\mu g)(t)\right)
\end{equation}
is a solution to the linear conformable equation
\begin{equation}
\label{eq:1:ref}
x^{(\alpha)}(t)+p(t)x(t)=g(t),
\quad x(a)=x_0, \quad a>0.
\end{equation}
\end{theorem}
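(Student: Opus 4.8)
The plan is to adapt the classical integrating-factor method to the conformable setting, exactly as in the proof of Theorem~\ref{th2} but now with a variable coefficient $p$. The integrating factor is the function $\mu(t):=e^{I_{\alpha}^{a}(p)(t)}$, which is well defined and strictly positive on $[a,b]$ because $p$ is continuous there; in particular $1/\mu(t)$ makes sense, so the formula \eqref{eq:ref2:conc} is meaningful. Note that in the special case $p(t)\equiv(1/a)^{\alpha}$ this $\mu$ reduces, up to the harmless constant factor $e^{-1/\alpha}$, to the exponential appearing in \eqref{eq:f:x:th2}, so Theorem~\ref{thm:ref2} indeed contains Theorem~\ref{th2}.

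First I would record the defining property of $\mu$. Since $p$ is continuous, $I_{\alpha}^{a}(p)(t)=\int_{a}^{t}p(\tau)\tau^{\alpha-1}\,d\tau$ is differentiable with $\frac{d}{dt}I_{\alpha}^{a}(p)(t)=p(t)t^{\alpha-1}$ by the ordinary fundamental theorem of calculus, hence $\mu$ is differentiable and, by the last assertion of Theorem~\ref{th0},
$$
\mu^{(\alpha)}(t)=t^{1-\alpha}\mu'(t)
=t^{1-\alpha}\,\mu(t)\,p(t)\,t^{\alpha-1}=p(t)\,\mu(t).
$$
Thus $\mu$ solves the homogeneous equation $\mu^{(\alpha)}=p\,\mu$, which is the whole point of the construction.

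Next I would differentiate $x$. Rewriting \eqref{eq:ref2:conc} as $\mu(t)x(t)=x(a)\mu(a)+I_{\alpha}^{a}(\mu g)(t)$ and applying the product rule (item~(2) of Theorem~\ref{th0}) to the left-hand side together with Theorem~\ref{th1} on the right-hand side (the constant $x(a)\mu(a)$ having zero $\alpha$-derivative by Example~\ref{ex}, and $T_{\alpha}\bigl(I_{\alpha}^{a}(\mu g)\bigr)(t)=\mu(t)g(t)$), I obtain
$$
\mu(t)\,x^{(\alpha)}(t)+x(t)\,\mu^{(\alpha)}(t)=\mu(t)\,g(t).
$$
Substituting $\mu^{(\alpha)}=p\,\mu$ and dividing by $\mu(t)>0$ gives $x^{(\alpha)}(t)+p(t)x(t)=g(t)$, which is \eqref{eq:1:ref}. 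The initial condition is immediate: evaluating \eqref{eq:ref2:conc} at $t=a$ kills the integral since $I_{\alpha}^{a}(\mu g)(a)=0$, leaving $x(a)=\mu(a)^{-1}x(a)\mu(a)=x_{0}$.

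The hard part will be the regularity bookkeeping needed to legitimately apply Theorem~\ref{th1}, since $g$ is only assumed to lie in $L^{1}([a,b])$ rather than to be continuous. Because $\mu$ is continuous and hence bounded on the compact interval $[a,b]$, the product $\mu g$ again lies in $L^{1}([a,b])$, so $I_{\alpha}^{a}(\mu g)$ is well defined and, being the quotient of the $\alpha$-differentiable numerator by the $\alpha$-differentiable nonvanishing $\mu$, the function $x$ is itself $\alpha$-differentiable. The identity $T_{\alpha}\bigl(I_{\alpha}^{a}(\mu g)\bigr)=\mu g$ then holds in the almost-everywhere sense, invoking the $L^{1}$ form of the fundamental theorem (Lemma~2.8 of \cite{MR3293309}) in place of the continuous statement of Theorem~\ref{th1}, exactly as was already implicit in the proof of Theorem~\ref{th2}. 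With that point settled, every remaining step is the routine product-rule computation carried out above.
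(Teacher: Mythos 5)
Your proof is correct and follows essentially the same route as the paper: both hinge on the integrating factor $\mu(t)=e^{I_{\alpha}^{a}(p)(t)}$ with $\mu^{(\alpha)}=p\,\mu$, the product rule of Theorem~\ref{th0}, and the fundamental theorem for the conformable integral. The only (harmless) differences are that you verify directly that the formula \eqref{eq:ref2:conc} satisfies \eqref{eq:1:ref} rather than deriving the formula by applying $I_{\alpha}^{a}$ to $(x\mu)^{(\alpha)}=\mu g$, and you justify $\mu^{(\alpha)}=p\,\mu$ via the identity $T_{\alpha}(f)(t)=t^{1-\alpha}f'(t)$ instead of the chain rule of \cite{MR3293309}; your added care about $g\in L^{1}$ is a welcome refinement of a point the paper leaves implicit.
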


\begin{proof}
Consider the integrating factor function $\mu(t)=e^{I_{\alpha}^{a}(p)(t)}$.
Then, by means of item (3) of Example~\ref{ex} and the Chain Rule
\cite[Theorem~2.11]{MR3293309}, one can see that $\mu^{(\alpha)}(t)= p(t)\mu(t)$.
Then, multiply \eqref{eq:1:ref} by function $\mu(t)$. By means of the product
rule (item (2) of Theorem~\ref{th0}), \eqref{eq:1:ref} turns to
\begin{equation}
\label{eq:2:ref}
(x(t)\mu(t))^{(\alpha)}=\mu(t)g(t).
\end{equation}
Apply $I_{\alpha}^{a}$ to \eqref{eq:2:ref} and use Lemma~2.8 in \cite{MR3293309}
to conclude that \eqref{eq:ref2:conc} holds.
\end{proof}

Theorem~\ref{th2} follows as a corollary
from Theorem~\ref{thm:ref2} by putting
$\mu(t)=e^{\frac{1}{\alpha}(\frac{t}{a})^{\alpha}}$
and $p(t)=e^{\frac{1}{\alpha}}$.

\begin{proposition}
\label{pr3}
If $x:(0,\infty)\rightarrow \mathbb{R}$ is $\alpha$-differentiable at $t\in[a,b]$,
then $|x(t)|^{(\alpha)}=\frac{x(t) \, x^{\alpha}(t)}{|x(t)|}$.
\end{proposition}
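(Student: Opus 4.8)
The plan is to work directly from the limit definition of the conformable derivative (Definition~\ref{def:cfd}) and to split into cases according to the sign of $x(t)$. Since the claimed identity carries $|x(t)|$ in a denominator, it is implicitly assumed that $x(t)\neq 0$; here $x^{(\alpha)}(t)$ denotes the conformable derivative written $x^{\alpha}(t)$ in the statement, and $|x(t)|^{(\alpha)}$ means the conformable derivative of the function $|x|$ evaluated at $t$. The mechanism driving the proof is that, near a point where $x$ does not vanish, $|x|$ coincides with either $+x$ or $-x$, so its difference quotient equals that of $x$ up to the sign $x(t)/|x(t)|$.

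First I would record that $\alpha$-differentiability of $x$ at $t$ forces continuity of $x$ at $t$. Indeed, with $t>0$ fixed we may write $x(t+\epsilon t^{1-\alpha})-x(t)=\epsilon\cdot\frac{x(t+\epsilon t^{1-\alpha})-x(t)}{\epsilon}$; as $\epsilon\to 0$ the quotient tends to the finite limit $x^{(\alpha)}(t)$ while $\epsilon\to 0$, so the left-hand side tends to $0$. Because $h:=\epsilon t^{1-\alpha}$ ranges over a full neighbourhood of $0$ (as $t^{1-\alpha}>0$), this shows $x$ is continuous at $t$. Consequently, if $x(t)>0$ (resp. $x(t)<0$) there is a neighbourhood of $t$ on which $x$ keeps the sign of $x(t)$, hence $x(t+\epsilon t^{1-\alpha})$ has that sign for all sufficiently small $\epsilon$.

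Then I would carry out the two cases. If $x(t)>0$, then for $\epsilon$ small enough $|x(t+\epsilon t^{1-\alpha})|=x(t+\epsilon t^{1-\alpha})$ and $|x(t)|=x(t)$, so the difference quotient defining $|x|^{(\alpha)}(t)$ equals the one defining $x^{(\alpha)}(t)$; passing to the limit gives $|x|^{(\alpha)}(t)=x^{(\alpha)}(t)=\frac{x(t)}{|x(t)|}\,x^{(\alpha)}(t)$, using $x(t)/|x(t)|=1$. If $x(t)<0$, the identical computation yields $|x|^{(\alpha)}(t)=-x^{(\alpha)}(t)=\frac{x(t)}{|x(t)|}\,x^{(\alpha)}(t)$, now with $x(t)/|x(t)|=-1$. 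In both cases the asserted formula holds.

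The only genuinely delicate point is this sign-stability step: it is exactly where continuity is invoked and where the tacit hypothesis $x(t)\neq 0$ is essential, since at a zero of $x$ the right-hand side is undefined and the one-sided behaviour of $|x|$ need not equal $\pm x^{(\alpha)}$. An alternative and equally short route avoids the case split entirely: for $t>0$, $\alpha$-differentiability of $x$ at $t$ is equivalent to ordinary differentiability there, so the classical chain rule gives $\frac{d}{dt}|x(t)|=\frac{x(t)}{|x(t)|}\,x'(t)$; multiplying by $t^{1-\alpha}$ and using the relation $T_{\alpha}(f)(t)=t^{1-\alpha}f'(t)$ from Theorem~\ref{th0} yields the result immediately.
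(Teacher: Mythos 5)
Your argument is correct (under the tacit hypothesis $x(t)\neq 0$, which the denominator in the statement forces anyway), but it follows a genuinely different route from the paper's. The paper does not split into cases according to the sign of $x(t)$: it multiplies and divides the difference quotient of $|x|$ by $|x(t+\epsilon t^{1-\alpha})|+|x(t)|$, so that it becomes the difference quotient of $x^{2}$ times $\bigl(|x(t+\epsilon t^{1-\alpha})|+|x(t)|\bigr)^{-1}$, and then passes to the limit to get $[x^{2}]^{(\alpha)}(t)\cdot\frac{1}{2|x(t)|}=2x(t)x^{(\alpha)}(t)\cdot\frac{1}{2|x(t)|}$ via the product rule of Theorem~\ref{th0}. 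Both arguments rest on exactly the continuity of $x$ at $t$ that you establish explicitly: the paper uses it silently to send the denominator to $2|x(t)|$, whereas your sign-stability step makes the role of continuity and of the hypothesis $x(t)\neq 0$ visible, which is a merit. What the paper's algebraic identity buys is the absence of any case analysis, and the same computation would carry over verbatim to a vector-valued setting with $\|x(t)\|$, where a sign split is unavailable. Your closing alternative --- that for fixed $t>0$ the substitution $h=\epsilon t^{1-\alpha}$ shows $\alpha$-differentiability at $t$ is equivalent to ordinary differentiability with $T_{\alpha}(x)(t)=t^{1-\alpha}x'(t)$, after which the classical formula for $\frac{d}{dt}|x(t)|$ finishes the proof --- is also sound and is the shortest correct route, although it leans on an equivalence that the paper deliberately avoids invoking.
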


\begin{proof}
From Definition~\ref{def:cfd} we have
\begin{equation*}
\begin{split}
\left|x(t)\right|^{(\alpha)}
&=\lim_{\epsilon\rightarrow 0}\frac{|x(t+\epsilon t^{1-\alpha})|
-|x(t)|}{\epsilon}\\
&=\lim_{\epsilon\rightarrow 0}\frac{x\left(t+\epsilon t^{1-\alpha}\right)^{2}
-x(t)^{2}}{\epsilon\left(|x\left(t+\epsilon t^{1-\alpha}\right)|+|x(t)|\right)}\\
&=\lim_{\epsilon\rightarrow 0}\left[\frac{x(t+\epsilon t^{1-\alpha})^{2}
-x(t)^{2}}{\epsilon} \cdot \frac{1}{|x(t+\epsilon t^{1-\alpha})|+|x(t)|}\right]\\
&=\left[x(t)^{2}\right]^{(\alpha)}\frac{1}{2|x(t)|}\\
&=2 x(t) x^{(\alpha)}(t)\frac{1}{2|x(t)|},
\end{split}
\end{equation*}
which proves the intended relation.
\end{proof}


\section{Main Result}
\label{sec:3}

We begin by introducing the notion of tube solution to problem \eqref{pl}.

\begin{definition}
\label{definition}
Let $(v,M)\in C^{(\alpha)}([a,b],\mathbb{R})\times C^{(\alpha)}([a,b],[0,\infty))$.
We say that $(v,M) $ is a tube solution to problem \eqref{pl} if
\begin{enumerate}
\item[(i)] $\left(y-v(t)\right) \left(f(t,y)-v^{(\alpha)}\right)
\leq M(t)M^{(\alpha)}(t)$ for every $t\in[a,b]$
and every $y\in \mathbb{R}$ such that $|y-v(t)|=M(t)$;

\item[(ii)] $v^{(\alpha)}(t)=f(t,v(t))$ and $M^{(\alpha)}(t)=0$
for all $t\in[a,b]$ such that $M(t)=0$;

\item[(iii)] $|x_{0}-v(a)|\leq M(a)$.
\end{enumerate}
\end{definition}

\begin{notation}
We introduce the following notation:
$$
\mathbf{T}(v,M) := \left\{x\in C^{(\alpha)}([a,b],\mathbb{R}) :
| x(t)-v(t)| \leq M(t), \  t\in [a,b]\right\}.
$$
\end{notation}

Consider the following problem:
\begin{equation}
\label{eq:probAux}
\begin{cases}
x^{(\alpha)}+\frac{1}{a^{\alpha}}x(t)
=f(t,\widetilde{x}(t))+\frac{1}{a^{\alpha}}\widetilde{x}(t),
& t\in[a,b], \quad a>0,\\
x(a)=x_{0},
\end{cases}
\end{equation}
where
\begin{equation}
\label{eq:probAux:wt}
\widetilde{x}(t) :=
\begin{cases}
\frac{M(t)}{|x-v(t)|}(x(t)-v(t))+v(t)& \text{ if } |x-v(t)|> M(t),\\
x(t) & \text{ otherwise}.
\end{cases}
\end{equation}
Let us define the operator $\mathbf{N}:C([a,b])\rightarrow C([a,b])$ by
$$
\mathbf{N}(x)(t) := e^{-\frac{1}{\alpha}(\frac{t}{a})^{\alpha}}\left(
e^{\frac{1}{\alpha}}x_{0}+ {_{\alpha}\mathfrak{J}}_{a}^{t}\left[
\frac{f(s,\widetilde{x}(s))+\frac{1}{a^{\alpha}}\widetilde{x}(s)}{
e^{-\frac{1}{\alpha}(\frac{s}{a})^{\alpha}}}\right]\right).
$$
In the proof of Proposition~\ref{pr2}
we use the concept of compact function.

\begin{definition}[See p.~112 of \cite{MR1987179}]
Let $X$, $Y$ be topological spaces.
A map $f:X\rightarrow Y$ is called compact if $f(X)$
is contained in a compact subset of $Y$.
\end{definition}

Compact operators occur in many problems of classical analysis.
Note that operator $N$ is nonlinear
because $f$ is nonlinear. In the nonlinear case, the first
comprehensive research on compact operators was due to Schauder
\cite[p.~137]{MR1987179}. In this context,
the Arzel\`{a}--Ascoli theorem asserts that a subset is relatively compact
if and only if it is bounded and equicontinuous
\cite[p.~607]{MR1987179}.

\begin{proposition}
\label{pr2}
If $(v,M)\in C^{(\alpha)}([a,b],\mathbb{R})\times C^{(\alpha)}([a,b],[0,\infty))$
is a tube solution to \eqref{pl}, then
$\mathbf{N}:C([a,b])\rightarrow C([a,b])$ is compact.
\end{proposition}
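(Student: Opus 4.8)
The plan is to verify the hypotheses of the Arzel\`{a}--Ascoli theorem for the image set $\mathbf{N}\left(C([a,b])\right)$: once this set is shown to be bounded and equicontinuous, its closure is compact, so $\mathbf{N}\left(C([a,b])\right)$ is contained in a compact subset of $C([a,b])$, which is precisely the definition of a compact map. The decisive observation --- and the very reason the truncation $\widetilde{x}$ was introduced --- is that $\widetilde{x}(t)$ always lies in the tube. Indeed, from \eqref{eq:probAux:wt} one checks directly that $|\widetilde{x}(t)-v(t)|\le M(t)$ for every $t\in[a,b]$ and every $x\in C([a,b])$. Since $v$ and $M$ are continuous on the compact interval $[a,b]$, this yields a uniform bound $|\widetilde{x}(t)|\le \max_{[a,b]}|v|+\max_{[a,b]}M=:R$, independent of $x$.

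First I would exploit this to bound the integrand of $\mathbf{N}$ uniformly. The set $K:=\{(s,y): s\in[a,b],\ |y-v(s)|\le M(s)\}$ is compact, and since $f$ is continuous it is bounded on $K$; together with the bound $R$ this gives $\left|f(s,\widetilde{x}(s))+\frac{1}{a^{\alpha}}\widetilde{x}(s)\right|\le C_{0}$ for a constant $C_{0}$ not depending on $x$ or $s$. Dividing by $e^{-\frac{1}{\alpha}(s/a)^{\alpha}}$ and using that $s\mapsto e^{\frac{1}{\alpha}(s/a)^{\alpha}}$ is bounded on $[a,b]$, the whole integrand is dominated by a constant $C_{1}$, again independent of $x$.

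With this uniform bound in hand, boundedness of $\mathbf{N}\left(C([a,b])\right)$ follows from Proposition~\ref{pr1} together with the fact that, because $a>0$, the weight $s^{\alpha-1}$ is integrable (indeed bounded) on $[a,b]$: estimating $\left|{_{\alpha}\mathfrak{J}}_{a}^{t}[\cdot]\right|\le C_{1}\,\frac{t^{\alpha}-a^{\alpha}}{\alpha}$ and multiplying by the bounded factor $e^{-\frac{1}{\alpha}(t/a)^{\alpha}}\le 1$ produces a bound on $|\mathbf{N}(x)(t)|$ uniform in $x$ and $t$. For equicontinuity I would write $\mathbf{N}(x)=E\cdot G$, where $E(t)=e^{-\frac{1}{\alpha}(t/a)^{\alpha}}$ is smooth (hence Lipschitz and bounded on $[a,b]$) and $G(t)=e^{\frac{1}{\alpha}}x_{0}+{_{\alpha}\mathfrak{J}}_{a}^{t}[\cdot]$. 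For $t_{1},t_{2}\in[a,b]$ the uniform bound gives $|G(t_{1})-G(t_{2})|\le \frac{C_{1}}{\alpha}\,|t_{1}^{\alpha}-t_{2}^{\alpha}|$, and since $t\mapsto t^{\alpha}$ is uniformly continuous on $[a,b]$ the family $\{G\}$ is equicontinuous; splitting $E(t_{1})G(t_{1})-E(t_{2})G(t_{2})$ into the two usual cross terms and using that $E$ and $G$ are uniformly bounded while both are equicontinuous transfers equicontinuity to $\{\mathbf{N}(x)\}$.

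Having established that $\mathbf{N}\left(C([a,b])\right)$ is bounded and equicontinuous, the Arzel\`{a}--Ascoli theorem gives that it is relatively compact, and therefore contained in the compact set which is its closure; thus $\mathbf{N}$ is compact. The main obstacle is conceptual rather than computational: everything hinges on producing the $x$-independent bound $C_{1}$, and this is possible only because the truncation confines $\widetilde{x}$ to the compact tube $K$. Without the modification \eqref{eq:probAux:wt}, the term $f(s,x(s))$ could be arbitrarily large as $x$ ranges over all of $C([a,b])$, and the image of $\mathbf{N}$ would fail to be bounded, let alone precompact. The only technical care needed is in handling the conformable weight $s^{\alpha-1}$, but since $a>0$ there is no singularity and this weight is merely a bounded continuous function on $[a,b]$.
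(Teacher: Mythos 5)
Your argument for the relative compactness of $\mathbf{N}\left(C([a,b])\right)$ is correct and is essentially the second half of the paper's own proof: the truncation $\widetilde{x}$ is confined to the compact tube, so $f(s,\widetilde{x}(s))+\frac{1}{a^{\alpha}}\widetilde{x}(s)$ admits an $x$-independent bound, the weight $s^{\alpha-1}$ is harmless because $a>0$, and uniform boundedness together with equicontinuity (your product decomposition $\mathbf{N}(x)=E\cdot G$ is a clean way to organize this) yields relative compactness via Arzel\`a--Ascoli. You also correctly identify the role of the modification \eqref{eq:probAux:wt}.

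The gap is that you never prove that $\mathbf{N}$ is \emph{continuous}. In the cited reference \cite{MR1987179} a ``map'' is by convention a continuous map, so a ``compact map'' is a continuous map whose image lies in a compact set; accordingly, the paper devotes the first (and more delicate) half of its proof to showing that $x_{n}\rightarrow x$ in $C([a,b])$ implies $\mathbf{N}(x_{n})\rightarrow\mathbf{N}(x)$. Your reading of the definition as requiring only that $\mathbf{N}\left(C([a,b])\right)$ be contained in a compact set drops exactly this half. The missing step needs two ingredients: (i) the truncation $x\mapsto\widetilde{x}$ is continuous from $C([a,b])$ to itself (pointwise it is the $1$-Lipschitz retraction of $x(t)$ onto the interval $[v(t)-M(t),\,v(t)+M(t)]$), so $\|\widetilde{x}_{n}-\widetilde{x}\|_{\infty}\rightarrow 0$; and (ii) $f$ is uniformly continuous on the compact set $[a,b]\times \overline{B_{R}(0)}$, so that $f(\cdot,\widetilde{x}_{n}(\cdot))\rightarrow f(\cdot,\widetilde{x}(\cdot))$ uniformly, whence $\mathbf{N}(x_{n})\rightarrow\mathbf{N}(x)$ by the same integral estimates you already use for boundedness. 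Without continuity the proposition cannot serve its purpose in Theorem~\ref{thm:mr}, since Schauder's fixed point theorem requires a continuous map with relatively compact image; you should add this argument.
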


\begin{proof}
Let $\{x_{n} \}_{n\in\mathbb{N}}$ be a sequence of $C([a,b],\mathbb{R})$
converging to $x\in C([a,b],\mathbb{R})$. By Proposition~\ref{pr1},
\begin{equation*}
\begin{split}
|\mathbf{N}(x_{n}(t)) &- \mathbf{N}(x(t))|
=\Bigg|e^{-\frac{1}{\alpha}(\frac{s}{a})^{\alpha}}\left( e^{\frac{1}{\alpha}}x_{0}
+ {_{\alpha}\mathfrak{J}}_{a}^{t}\left[\frac{f(s,\widetilde{x}_{n}(s))
+ \frac{1}{a^{\alpha}}\widetilde{x}_{n}(s)}{e^{-\frac{1}{\alpha}(
\frac{s}{a})^{\alpha}}}\right]\right)\\
& \qquad -e^{-\frac{1}{\alpha}\left(\frac{s}{a}\right)^{\alpha}}
\left( e^{\frac{1}{\alpha}}x_{0}
+ {_{\alpha}\mathfrak{J}}_{a}^{t}\left[\frac{f(s,\widetilde{x}(s))
+ \frac{1}{a^{\alpha}}\widetilde{x}(s)}{
e^{-\frac{1}{\alpha}\left(\frac{s}{a}\right)^{\alpha}}}\right]\right) \Bigg|\\
&\leq \frac{K}{C} {_{\alpha}\mathfrak{J}}_{a}^{t}\left[\left|
\left(f(s,\widetilde{x}_{n}(s))+ \frac{1}{a^{\alpha}}\widetilde{x}_{n}(s)\right)
- \left(f(s,\widetilde{x}(s)) + \frac{1}{a^{\alpha}}\widetilde{x}(s)\right)
\right|\right]\\
&\leq \frac{K}{C} \left({_{\alpha}\mathfrak{J}}_{a}^{t}
\left[\left|f(s,\widetilde{x}_{n}(s))-f(s,\widetilde{x}(s))\right|\right]
+\frac{1}{a^{\alpha}}{_{\alpha}\mathfrak{J}}_{a}^{t}
\left[\left|\widetilde{x}_{n}(s)-\widetilde{x}(s)\right|\right]\right),
\end{split}
\end{equation*}
where $K:=\max_{a\leq s\leq b}\{e^{-\frac{1}{\alpha}(\frac{s}{a})^{\alpha}} \}$
and $C:=\min_{a\leq s\leq b}\{e^{-\frac{1}{\alpha}(\frac{s}{a})^{\alpha}}\}$.
We need to show that the sequence $\{g_{n}\}_{n\in\mathbb{N}}$ defined by
$g_{n}(s):=f(s,\widetilde{x}_{n}(s))+ \frac{1}{a^{\alpha}}\widetilde{x}_{n}(s)$
converges in $C^{(\alpha)}([a,b])$ to function $g(s)=f(s,\widetilde{x}(s))
+ \frac{1}{a^{\alpha}}\widetilde{x}(s)$. Since there is a constant $R>0$
such that $\|\widetilde{x}\|_{C([a,b],\mathbb{R})}<R$, there exists an index
$N$ such that $\|\widetilde{x}_{n}\|_{C([a,b],\mathbb{R})}\leq R$
for all $n>N$. Thus, $f$ is uniformly continuous on $[a,b]\times B_{R}(0)$.
Therefore, for $\epsilon>0$ given, there is a $\delta>0$ such that
\begin{equation*}
|y-x|<\delta<\frac{C\epsilon \alpha a^{\alpha}}{2k(b^{\alpha}-a^{\alpha})}
\end{equation*}
for all $x,y\in \mathbb{R}$;
\begin{equation*}
|f(s,y)-f(s,x)|<\frac{C\epsilon \alpha }{2k(b^{\alpha}-a^{\alpha})}
\end{equation*}
for all $s\in[a,b]$. By assumption, one can find an index
$\hat{N}>N$ such that
$\|\widetilde{x}_{n}-\widetilde{x}\|_{C([a,b],\mathbb{R})}<\delta$
for $n>\hat{N}$. In this case,
\begin{equation*}
\begin{split}
\left|\mathbf{N}(x_{n})(t)-\mathbf{N}(x)(t)\right|
&<\frac{K}{C}\left( {_{\alpha}\mathfrak{J}}_{a}^{b}\left[\frac{C\epsilon \alpha}{
2k(b^{\alpha}-a^{\alpha})}\right]+\frac{1}{a^{\alpha}}{_{\alpha}\mathfrak{J}}_{a}^{b}
\left[\frac{C\epsilon \alpha a^{\alpha}}{2k(b^{\alpha}-a^{\alpha})}\right]\right)\\
&=\frac{2KC\epsilon \alpha}{2kC(b^{\alpha}-a^{\alpha})}
{_{\alpha}\mathfrak{J}}_{a}^{b}[1]\\
&=\frac{\epsilon \alpha}{b^{\alpha}-a^{\alpha}}
\frac{b^{\alpha}-a^{\alpha}}{\alpha}\\
&=\epsilon.
\end{split}
\end{equation*}
This proves the continuity of $\mathbf{N}$. We now show that the set
$\mathbf{N}(C([a,b]))$ is relatively compact. Consider a sequence
$\{y_{n}\}_{n\in\mathbb{N}}$ of $\mathbf{N}(C([a,b]))$ for all $n\in\mathbb{N}$.
It exists $x_{n}\in C([a,b])$ such that $y_{n}=\mathbf{N}(x_{n})$. Observe
that from Proposition~\ref{pr1} we have
\begin{equation*}
\begin{split}
\left|\mathbf{N}(x_{n})(t)\right|
&=\left|e^{-\frac{1}{\alpha}(\frac{t}{a})^{\alpha}}\left(
e^{\frac{1}{\alpha}}x_{0} + {_{\alpha}\mathfrak{J}}_{a}^{t}\left[
\frac{f(s,\widetilde{x}_{n}(s)) + \frac{1}{a^{\alpha}}\widetilde{x}_{n}(s)}{
e^{-\frac{1}{\alpha}\left(\frac{s}{a}\right)^{\alpha}}}\right]\right)\right|\\
&\leq K\left(e^{\frac{1}{\alpha}}|x_{0}|+\frac{1}{C}
{_{\alpha}\mathfrak{J}}_{a}^{b} \left[\left|f(t,\widetilde{x}_{n}(s))
+ \frac{1}{a^{\alpha}}\widetilde{x}_{n}(s)\right|\right]\right)\\
&\leq K\left(e^{\frac{1}{\alpha}}|x_{0}| + \frac{1}{C}
{_{\alpha}\mathfrak{J}}_{a}^{b}\left[
\left|f\left(t,\widetilde{x}_{n}(s)\right)\right|\right]
+\frac{1}{C a^{\alpha}} {_{\alpha}\mathfrak{J}}_{a}^{b}
\left[\left|\widetilde{x}_{n}(s)\right|\right]\right).
\end{split}
\end{equation*}
By definition, there is an $R>0$ such that $|\widetilde{x}_{n}(s)|\leq R$ for all
$s\in[a,b]$ and all $n\in\mathbb{N}$. The function $f$ is compact
on $[a,b]\times B_{R}(0)$ and we can deduce the existence of a constant $A>0$
such that $|f(s,\widetilde{x}_{n}(s)|\leq A$ for all $s\in[a,b]$.
The sequence $\{y_{n}\}_{n\in\mathbb{N}}$ is uniformly bounded
for all $n\in\mathbb{N}$. Observe also that for $t_{1},t_{2}\in[a,b]$ we have
\begin{equation*}
\begin{split}
| \mathbf{N}(x_{n})(t_{2}) &- \mathbf{N}(x_{n})(t_{1})|\\
&\leq B\left|e^{-\frac{1}{\alpha}(\frac{t_{1}}{a})^{\alpha}}
-e^{-\frac{1}{\alpha}(\frac{t_{2}}{a})^{\alpha}}\right|
+\frac{K(A+\acute{R})}{C}\left|{_{\alpha}\mathfrak{J}}_{t_{1}}^{t_{2}}[1]\right|\\
&< B\left|e^{-\frac{1}{\alpha}(\frac{t_{1}}{a})^{\alpha}}
-e^{-\frac{1}{\alpha}(\frac{t_{2}}{a})^{\alpha}}\right|
+\frac{K(A+\acute{R})}{C}\frac{1}{\alpha}\left|t_{1}^{\alpha}-t_{2}^{\alpha}\right|,
\end{split}
\end{equation*}
where $B:=e^{\alpha}x_{0}$, $\acute{R}:=\frac{R}{a^{\alpha}}$,
$K:=\max_{a\leq t\leq b}\{e^{-\frac{1}{\alpha}(\frac{t}{a})^{\alpha}}\}$,
and $C:=\min_{a\leq t \leq b}\{e^{-\frac{1}{\alpha}(\frac{t}{a})^{\alpha}}\}$.
This proves that the sequence $\{y_{n}\}_{n\in \mathbb{N}}$ is equicontinuous.
By the Arzel\`{a}--Ascoli theorem, $\mathbf{N}(C([a,b]))$ is relatively compact
and hence $\mathbf{N}$ is compact.
\end{proof}

\begin{theorem}
\label{thm:mr}
If $(v,M)\in C^{(\alpha)}([a,b],\mathbb{R})\times C^{(\alpha)}([a,b],[0,\infty))$
is a tube solution to \eqref{pl}, then problem \eqref{pl} has a solution
$x\in C^{(\alpha)}([a,b],\mathbb{R})\cap \mathrm{T}(v,M)$.
\end{theorem}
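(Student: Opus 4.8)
The plan is to apply Schauder's fixed-point theorem to the operator $\mathbf{N}$ and then show that any fixed point lies in the tube $\mathbf{T}(v,M)$ and is therefore an actual solution to \eqref{pl}. The first step is to observe that Proposition~\ref{pr2} already gives us that $\mathbf{N}:C([a,b])\rightarrow C([a,b])$ is compact, so $\overline{\mathbf{N}(C([a,b]))}$ is a compact convex set (after passing to its closed convex hull, which Schauder permits). Since $\mathbf{N}$ maps this set continuously into itself, Schauder's theorem yields a fixed point $x=\mathbf{N}(x)$. By Theorem~\ref{th2} applied with $g(t)=f(t,\widetilde{x}(t))+\frac{1}{a^{\alpha}}\widetilde{x}(t)$, this fixed point $x$ is precisely a solution of the auxiliary problem \eqref{eq:probAux}, hence $x\in C^{(\alpha)}([a,b])$ and satisfies $x^{(\alpha)}(t)+\frac{1}{a^{\alpha}}x(t)=f(t,\widetilde{x}(t))+\frac{1}{a^{\alpha}}\widetilde{x}(t)$ with $x(a)=x_0$.

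The crucial step — and the main obstacle — is showing that this fixed point actually satisfies $|x(t)-v(t)|\leq M(t)$ for all $t\in[a,b]$, i.e.\ that $x\in\mathbf{T}(v,M)$. Once that is established, the truncation \eqref{eq:probAux:wt} is inactive, so $\widetilde{x}=x$ and the auxiliary equation collapses to $x^{(\alpha)}(t)=f(t,x(t))$, which is exactly \eqref{pl}. To prove the containment I would introduce the auxiliary function $r(t):=|x(t)-v(t)|-M(t)$ and aim to apply Lemma~\ref{le}, which requires checking $r(a)\leq 0$ and $r^{(\alpha)}(t)<0$ on the set $\{t:r(t)>0\}$. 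The initial condition $r(a)\leq 0$ follows immediately from tube condition (iii) together with $x(a)=x_0$ and $v(a)=v(a)$.

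The heart of the argument is the differential inequality $r^{(\alpha)}(t)<0$ wherever $r(t)>0$. On the set where $|x(t)-v(t)|>M(t)$, I would compute $|x(t)-v(t)|^{(\alpha)}$ using Proposition~\ref{pr3}, obtaining
\begin{equation*}
|x(t)-v(t)|^{(\alpha)}
=\frac{(x(t)-v(t))\left(x^{(\alpha)}(t)-v^{(\alpha)}(t)\right)}{|x(t)-v(t)|}.
\end{equation*}
Here the key is that on this set the truncation gives $\widetilde{x}(t)-v(t)=\frac{M(t)}{|x(t)-v(t)|}(x(t)-v(t))$, so $|\widetilde{x}(t)-v(t)|=M(t)$. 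Substituting the fixed-point equation $x^{(\alpha)}(t)=f(t,\widetilde{x}(t))+\frac{1}{a^{\alpha}}(\widetilde{x}(t)-x(t))$ and exploiting that $\widetilde{x}(t)-x(t)$ points in the direction of $-(x(t)-v(t))$ on the overshoot region, the term $\frac{1}{a^{\alpha}}(x(t)-v(t))(\widetilde{x}(t)-x(t))$ contributes strictly negatively. The remaining expression $(\widetilde{x}(t)-v(t))(f(t,\widetilde{x}(t))-v^{(\alpha)}(t))$ is controlled by tube condition (i), since $|\widetilde{x}(t)-v(t)|=M(t)$ places us exactly in the regime where (i) applies, yielding the bound $M(t)M^{(\alpha)}(t)$. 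After dividing by $|x(t)-v(t)|$ and comparing against $M^{(\alpha)}(t)$, these estimates combine to give $r^{(\alpha)}(t)=|x(t)-v(t)|^{(\alpha)}-M^{(\alpha)}(t)<0$ on $\{r>0\}$.

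The delicate points I expect to need care are: handling the subset of $\{r>0\}$ where $M(t)=0$ (where one invokes tube condition (ii) and must argue $r>0$ forces a contradiction directly, since $|x-v|>0=M$ with $M^{(\alpha)}=0$ and $v^{(\alpha)}=f(t,v)$), and verifying the strictness of the inequality rather than merely $\leq 0$, which is what Lemma~\ref{le} demands. The factor $\frac{1}{a^{\alpha}}\left(M(t)-|x(t)-v(t)|\right)$ arising from the linear term is strictly negative precisely on $\{r>0\}$ and supplies the needed strict sign. With $r(a)\leq 0$ and $r^{(\alpha)}<0$ on $\{r>0\}$ verified, Lemma~\ref{le} gives $r(t)\leq 0$ throughout $[a,b]$, hence $x\in\mathbf{T}(v,M)$, completing the proof.
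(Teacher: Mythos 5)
Your proposal is correct and follows essentially the same route as the paper's proof: Schauder's fixed-point theorem applied to the compact operator $\mathbf{N}$, identification of the fixed point as a solution of the auxiliary problem via Theorem~\ref{th2}, and then the tube containment argument with $r(t)=|x(t)-v(t)|-M(t)$, Proposition~\ref{pr3}, the case split on $M(t)>0$ versus $M(t)=0$ using tube conditions (i) and (ii), and Lemma~\ref{le}. All the key points you flag as delicate (the strict negativity supplied by $\frac{1}{a^{\alpha}}(M(t)-|x(t)-v(t)|)$ and the $M(t)=0$ case) are handled exactly as in the paper.
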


\begin{proof}
By Proposition~\ref{pr2}, the operator $\mathbf{N}$ is compact.
It has a fixed point by the Schauder fixed point theorem
(see p.~137 of \cite{MR1987179}). Therefore,
Theorem~\ref{th2} implies that such fixed point is a solution
to problem \eqref{eq:probAux}--\eqref{eq:probAux:wt}
and it suffices to show that for every solution $x$
to problem \eqref{eq:probAux}--\eqref{eq:probAux:wt},
$x\in\mathbf{T}(v,M)$. Consider the set
$A :=\left\{t\in[a,b]:|x(t)-v(t)|>M(t)\right\}$.
If $t\in A$, then by virtue of Proposition~\ref{pr3} we have
$$
\left(|x(t)-v(t)|-M(t)\right)^{(\alpha)}
=\frac{\left( x(t)-v(t)\right) \left(x^{(\alpha)}(t)-v^{(\alpha)}(t)
\right)}{|x(t)-v(t)|}-M^{(\alpha)}(t).
$$
Therefore, since $(v,M)$ is a tube solution to problem \eqref{pl},
we have on $\{ t\in A : M(t)> 0\}$ that
\begin{equation*}
\begin{split}
(|x(t)&-v(t)|-M(t))^{(\alpha)}\\
&= \frac{\left(x(t)-v(t)\right) \left(x^{(\alpha)}(t)-v^{(\alpha)}(t)
\right)}{|x(t)-v(t)|} - M^{(\alpha)}(t)\\
&= \frac{\left(x(t)-v(t)\right) \left(f(t,\widetilde{x}(t))
+\left(\frac{1}{a^{\alpha}}\widetilde{x}(t)-\frac{1}{a^{\alpha}}x(t)\right)
-v^{(\alpha)}(t)\right)}{|x(t)-v(t)|}-M^{(\alpha)}(t)\\
&= \frac{\left(\widetilde{x}(t)-v(t)\right)
\left(f(t,\widetilde{x}(t))- v^{(\alpha)}(t)\right)}{M(t)}
+\frac{\left(\widetilde{x}(t)-v(t)\right) \left(\widetilde{x}(t)-x(t)
\right)}{a^{\alpha}M(t)}-M^{(\alpha)}(t)\\
&=\frac{\left(\widetilde{x}(t)-v(t)\right)
\left(f(t,\widetilde{x}(t))- v^{(\alpha)}(t)\right)}{M(t)}
+\left[\frac{M(t)}{|x(t)-v(t)|}-1\right]\frac{|x(t)-v(t)|^{2}}{a^{\alpha}
\left|x(t)-v(t)\right|}-M^{(\alpha)}(t)\\
&=\frac{\left(\widetilde{x}(t)-v(t)\right) \left(f(t,\widetilde{x}(t))
- v^{(\alpha)}(t)\right)}{M(t)}+\left[\frac{M(t)}{a^{\alpha}}
-\frac{|x(t)-v(t)|}{a^{\alpha}}\right]- M^{(\alpha)}(t)\\
&\leq \frac{M(t)M^{\alpha}(t)}{M(t)}+\frac{1}{a^{\alpha}}
\left[ M(t)-|x(t)-v(t)|\right]-M^{(\alpha)}(t)\\
&< 0.
\end{split}
\end{equation*}
On the other hand, we have on $t\in\{ \tau \in A : M(\tau)= 0\}$ that
\begin{equation*}
\begin{split}
\left(|x(t)-v(t)|-M(t)\right)^{(\alpha)}
&=\frac{\left(x(t)-v(t)\right) \left(f(t,\widetilde{x}(t))
+\left(\frac{1}{a^{\alpha}}\widetilde{x}(t)
-\frac{1}{a^{\alpha}}x(t)\right)
-v^{(\alpha)}(t)\right)}{|x(t)-v(t)|} - M^{(\alpha)}(t)\\
&= \frac{\left(x(t)-v(t)\right) \left(f(t,\widetilde{x}(t))
-v^{(\alpha)}(t) \right)}{|x(t)-v(t)|}
-\frac{1}{a^{\alpha}}|x(t)-v(t)|-M^{(\alpha)}(t)\\
&< -M^{(\alpha)}(t)\\
&=0.
\end{split}
\end{equation*}
The last equality follows from Definition~\ref{definition}. If we set
$r(t):= |x(t)-v(t)|-M(t)$, then $r^{(\alpha)}<0$ on $A:= \{t\in [a,b]:r(t)>0\}$.
Moreover, since $(v,M)$ is a tube solution to problem \eqref{pl}
and $x$ satisfies $|x_{0}-v(a)|\leq M(a)$, we know that $r(a)\leq 0$
and Lemma~\ref{le} implies that $A=\emptyset$. Therefore, $x\in \mathrm{T}(v,M)$
and the theorem is proved.
\end{proof}


\section{An Example}
\label{sec:4}

Consider the conformable noninteger order system
\begin{equation}
\label{eq:example}
\begin{cases}
x^{(\frac{1}{2})}=a\frac{\sqrt{t}}{1+t}x^{3}(t)+bx(t)e^{cx(t)},
& t \in[1,2],\\
x(1)=0,
\end{cases}
\end{equation}
where $a,b\in(\infty,0]$ and $c$ is a real constant.
According to Definition~\ref{definition}, $(v,M)\equiv(0,1)$ is a tube solution.
It follows from our Theorem~\ref{thm:mr} that problem \eqref{eq:example}
has a solution $x$ such that $|x(t)|\leq 1$ for every $t\in [1,2]$.


\section*{Acknowledgments}

This research is part of first author's Ph.D. project,
which is carried out at Sidi Bel Abbes University, Algeria.
It was carried out while Bayour was visiting the Department
of Mathematics of University of Aveiro, Portugal, February to April of 2015.
The hospitality of the host institution and the financial support
of University of Chlef, Algeria, are here gratefully acknowledged.
Torres was supported by Portuguese funds through the Center for Research
and Development in Mathematics and Applications (CIDMA) and
the Portuguese Foundation for Science and Technology (FCT),
within project UID/MAT/04106/2013.
The authors are very grateful to three anonymous Referees
for their valuable comments, helpful questions and suggestions.



\bigskip


\end{document}